\documentclass[12pt]{amsart}
\usepackage{amscd,amsmath,amsthm,amssymb}
\usepackage{mathtools}
\usepackage[margin=2cm]{geometry}
\usepackage{epsfig}
\usepackage{rawfonts}
\usepackage{enumerate}
\usepackage{graphics}
\usepackage{multirow}
\usepackage{xspace}
\usepackage{graphicx}
\usepackage{mathrsfs}
\usepackage{amsmath}
\usepackage{amsfonts}
\usepackage{amssymb}
\usepackage{amsthm}
\usepackage{graphicx}
\usepackage{booktabs}
\usepackage{caption}
\usepackage{listings}
\usepackage{setspace}
\usepackage[mathscr]{eucal}
\usepackage{pgfplots}
\usepackage{hyperref}
\usepackage{wrapfig}
\usepackage{floatflt,epsfig}
\usepackage{ dsfont }
\usepackage{amscd}
\usepackage{tikz-cd}
\usepackage{fancyhdr}
\usepackage[all]{xy}
\usepackage{latexsym}
\usepackage{amscd}
\usepackage{pifont}
\usepackage{subfig}
\usepackage{easyReview}
\usepackage{subfig}
\usepackage{pstricks-add}
\usepackage{pgf,tikz,pgfplots}
\pgfplotsset{compat=1.15}
\usepackage{mathrsfs}
\usetikzlibrary{arrows}
\usetikzlibrary[patterns]
 \usepackage[normalem]{ulem}
 \usepackage{multicol}

\def\Mc{{\mathcal M}}

\renewcommand{\qedsymbol}{$\square$}

\def\opn#1#2{\def#1{\operatorname{#2}}} 
\opn\chara{char} \opn\length{\ell} \opn\pd{pd} \opn\rk{rk}
\opn\projdim{proj\,dim} \opn\injdim{inj\,dim} \opn\rank{rank}
\opn\depth{depth} \opn\grade{grade} \opn\height{height}
\opn\embdim{emb\,dim} \opn\codim{codim}

\opn\Tr{Tr} \opn\bigrank{big\,rank}
\opn\superheight{superheight}\opn\lcm{lcm}
\opn\trdeg{tr\,deg}
	\opn\reg{reg} \opn\lreg{lreg} \opn\ini{in} \opn\lpd{lpd}
	\opn\size{size} \opn\sdepth{sdepth}
	\opn\link{link}\opn\fdepth{fdepth}\opn\lex{lex}\opn\dist{dist}
	\opn\div{div} \opn\Div{Div} \opn\cl{cl} \opn\Cl{Cl}
	\opn\Spec{Spec} \opn\Supp{Supp} \opn\supp{supp} \opn\Sing{Sing}
	\opn\Ass{Ass} \opn\Min{Min}\opn\Mon{Mon}
	\opn\Ann{Ann} \opn\Rad{Rad} \opn\Soc{Soc}
	\opn\Im{Im} \opn\Ker{Ker} \opn\Coker{Coker} \opn\Am{Am}
	\opn\Hom{Hom} \opn\Tor{Tor} \opn\Ext{Ext} \opn\End{End}
	\opn\Aut{Aut} \opn\id{id}
	
	\opn\nat{nat}
	\opn\pff{pf}
	\opn\Pf{Pf} \opn\GL{GL} \opn\SL{SL} \opn\mod{mod} \opn\ord{ord}
	\opn\Gin{Gin} \opn\Hilb{Hilb}\opn\sort{sort}
	\opn\aff{aff} \opn
	\con{conv} \opn\relint{relint} \opn\st{st}
	\opn\lk{lk} \opn\cn{cn} \opn\core{core} \opn\vol{vol}
	\opn\link{link} \opn\star{star}\opn\lex{lex}\opn\set{set}
	\opn\gr{gr}
	
	\def\pot#1#2{#1[\kern-0.28ex[#2]\kern-0.28ex]}

	\opn\dirlim{\underrightarrow{\lim}}
	\opn\inivlim{\underleftarrow{\lim}}
	\let\to=\rightarrow
	
	\def\Implies{\ifmmode\Longrightarrow \else
		\unskip${}\Longrightarrow{}$\ignorespaces\fi}
	\def\implies{\ifmmode\Rightarrow \else
		\unskip${}\Rightarrow{}$\ignorespaces\fi}
	\def\iff{\ifmmode\Longleftrightarrow \else
		\unskip${}\Longleftrightarrow{}$\ignorespaces\fi}

	\let\:=\colon
	\let\epsilon\varepsilon
	\let\kappa=\varkappa
	\def\qed{\ifhmode\textqed\fi
		\ifmmode\ifinner\quad\qedsymbol\else\dispqed\fi\fi}
	\def\textqed{\unskip\nobreak\penalty50
		\hskip2em\hbox{}\nobreak\hfil\qedsymbol
		\parfillskip=0pt \finalhyphendemerits=0}
	\def\dispqed{\rlap{\qquad\qedsymbol}}
	
	\opn\dis{dis}
	\def\pnt{{\raise0.5mm\hbox{\large\bf.}}}
	
	\opn\Lex{Lex}
	
        \newtheorem{Theorem}{Theorem}[section]
	\newtheorem{Lemma}[Theorem]{Lemma}
	\newtheorem{Corollary}[Theorem]{Corollary}
	
	\newtheorem{Remark}[Theorem]{Remark}
	
	\newtheorem{Example}[Theorem]{Example}
	
	\newtheorem{Definition}[Theorem]{Definition}

\begin{document}
   
        \title[ideals of $2$-minors]{Prime and Cohen--Macaulay ideals of $2$-minors with linear resolution}
	
\author[T.~Hibi]{Takayuki Hibi}
\address[Takayuki Hibi]
{Department of Pure and Applied Mathematics, 
Graduate School of Information Science and Technology, 
Osaka University, 
Suita, Osaka 565-0871, Japan}
\email{hibi@math.sci.osaka-u.ac.jp}

\author[A. A. Qureshi]{Ayesha Asloob Qureshi}      
   \address[Ayesha Asloob Qureshi]{Sabanci University, Faculty of Engineering and Natural Sciences, Orta Mahalle, Tuzla 34956, Istanbul, Turkey}	
\email{aqureshi@sabanciuniv.edu, ayesha.asloob@sabanciuniv.edu}

       
	\keywords{prime ideal, Cohen--Macaulay ring, linear resolution}
	
    \subjclass[2020]{13H10, 13P10}
    
	\thanks{} 

        \maketitle
      
\begin{abstract}
Prime and Cohen--Macaulay ideals of $2$-minors of matrices of variables with linear resolution are classified.
\end{abstract}


\section*{Introduction}
Binomial ideals form an important class of ideals in commutative algebra,
with close connections to combinatorics and toric geometry. Among the
classical examples are determinantal ideals generated by the $2$-minors
of a matrix of variables. In particular, let
$S=K[x_1,\ldots,x_n,y_1,\ldots,y_n]$ be a polynomial ring over a field
$K$, and let
\[
X_0=
\begin{bmatrix}
x_1 & x_2 & \cdots & x_n\\
y_1 & y_2 & \cdots & y_n
\end{bmatrix}
\]
be the generic $2\times n$ matrix. Set
\[
J=I_2(X_0)
  =(x_i y_j-x_jy_i:1\leq i<j\leq n).
\]
It is well known that $J$ is prime, $S/J$ is Cohen--Macaulay, and $J$
has a linear resolution; see, for example, \cite{BrunsVetter}.

In our previous work \cite{HQS}, we studied quadratic binomial ideals
from a converse point of view. Namely, we asked to what extent a prime
quadratic binomial ideal with Cohen--Macaulay quotient and linear
resolution must arise as the ideal of $2$-minors of a matrix of
variables. This question was motivated in part by several families of
binomial ideals arising from combinatorial structures for which these
three properties lead naturally to determinantal ideals. In
\cite{HQS}, we obtained partial results in this direction and were led
to the following related classification problem.

Let $X$ be a $2\times n$ matrix of not necessarily distinct variables in $S$. Let $I_2(X)\subseteq S$ denote the ideal generated by the $2$-minors of
$X$. We call $X$ a {\em PCML} matrix if (i) $I_2(X)$ is prime, (ii) $S/I_2(X)$ is Cohen--Macaulay and (iii) $I_2(X)$ has linear resolution. The problem of classifying all PCML matrices was posed in \cite{HQS}.
The purpose of the present paper is to solve this problem for
$2\times n$ matrices.

After excluding the immediate obstructions to primeness, we may write
\[
X=
\begin{bmatrix}
x_1 & x_2 & \cdots & x_n\\
z_1 & z_2 & \cdots & z_n
\end{bmatrix},
\]
where $z_1,\ldots,z_n$ are pairwise distinct elements of
$\{x_1,\ldots,x_n,y_1,\ldots,y_n\}$ and $z_i\neq x_i$ for every $i$.
Allowing repetitions between the two rows makes the classification
considerably subtler than in the generic case. Nevertheless, there are
important nongeneric examples retaining the PCML property, including
the $2\times n$ Hankel matrix. Thus it is natural to ask which patterns
of repetitions preserve these algebraic and homological properties.

In Section~1, for each subset
$\delta\subseteq\{2,\ldots,n\}$, we introduce the matrix
\[
X_n(\delta)=
\begin{bmatrix}
x_1 & x_2 & \cdots & x_n\\
\xi_1 & \xi_2 & \cdots & \xi_n
\end{bmatrix},
\qquad
\xi_i=
\begin{cases}
y_i, & i\notin\delta,\\
x_{i-1}, & i\in\delta.
\end{cases}
\]
This family contains both the generic matrix, when $\delta=\emptyset$,
and the Hankel matrix, when $\delta=\{2,\ldots,n\}$. In
Theorem~\ref{prime+CM+linear}, we prove that every $X_n(\delta)$ is PCML.

In Section~2, we associate with $X$ a directed graph $G_X$ on the
vertex set $\{1,\ldots,n\}$ by declaring that
\[
i\longrightarrow j
\qquad\Longleftrightarrow\qquad
x_i=z_j.
\]
As observed in Remark~\ref{rem:def}, every connected component of $G_X$ is either
a directed path or a directed cycle. In Theorem~\ref{prime}, we prove that
$I_2(X)$ is prime if and only if $G_X$ contains no directed cycle.
Moreover, in the acyclic case, the directed paths can be used to
permute the columns and relabel the variables so that $X$ becomes a
matrix of the form $X_n(\delta)$. Combining this with Theorem~\ref{prime+CM+linear}
yields our main classification, Theorem~\ref{mainPCML}:
\[
X\text{ is PCML}
\quad\Longleftrightarrow\quad
I_2(X)\text{ is prime}
\quad\Longleftrightarrow\quad
G_X\text{ contains no directed cycle}.
\]
Thus, within this class of $2\times n$ matrices, primeness alone forces
the Cohen--Macaulay property and linear resolution.

Theorem~\ref{mainPCML} also naturally raises the question whether the remaining two properties can be characterized combinatorially without assuming primality. More precisely, for a matrix $X$ as above, it would be interesting to determine, in terms of the directed graph $G_X$, when $S/I_2(X)$ is Cohen--Macaulay and when $I_2(X)$ has a linear resolution. In particular, one may ask how the structure of the directed cycles of $G_X$ influences these two properties. A broader direction is to ask whether an analogous classification can be obtained for $m\times n$ matrices with $m>2$. In that setting, the possible patterns of repeated variables are substantially more complicated, and the simple directed-graph description used here is no longer immediately available. Understanding to what extent the classification obtained in the present paper admits a higher-dimensional analogue therefore appears to be a natural problem for further study.


\section{Ideals of $2$-minors of $2 \times n$ matrices}
Let $S=K[x_1,\ldots,x_n,y_1,\ldots,y_n]$ denote the polynomial ring in $2n$ variables over a field $K$.  Fix $\delta \subset \{2, \ldots, n\}$ and introduce a $2 \times n$ matrix
$$
X_n(\delta)=
\begin{bmatrix}
   x_1 & x_2 & \cdots & x_n \\
   \xi_{1} & \xi_{2} & \cdots & \xi_{n} 
\end{bmatrix}
$$
where 
\[
\xi_i
=
\begin{cases}
\,y_i, & \text{if} \quad i \not\in \delta,\\[2pt]
\,x_{i-1},     & \text{if} \quad i \in \delta.
\end{cases}
\]
In particular, 
\[
X_n(\{2,\ldots, n\}) \, = \, 
\begin{bmatrix}
   x_1 & x_2 & x_3 & \cdots & x_n \\
   y_{1} & x_1 & x_2 & \cdots & x_{n-1}  
\end{bmatrix},
\qquad
X_n(\emptyset)=
\begin{bmatrix}
   x_1 & x_2 & \cdots & x_n \\
   y_{1} & y_{2} & \cdots & y_n  
\end{bmatrix}.
\]
Let $S_\delta$ denote the polynomial ring in the variables $x_1, \ldots, x_n$ and $y_j$ with $j \not\in \delta$ over $K$ and $I_n(\delta)$ the ideal of $S_\delta$ generated by all $2$-minors of $X_n(\delta)$.

The two extreme cases of this family are classical. If
$\delta=\emptyset$, then $I_n(\delta)$ is the ideal of maximal minors
of the generic $2\times n$ matrix. It is well known that this ideal is
prime, its quotient ring is Cohen--Macaulay, and it has a linear
resolution; see, for example, \cite{BrunsVetter}. At the other extreme,
if $\delta=\{2,\ldots,n\}$, then, after relabeling the variables,
$I_n(\delta)$ is the ideal generated by the $2$-minors of a
$2\times n$ Hankel matrix. This is the defining ideal of a rational
normal curve; in particular, it is prime, its quotient ring is
Cohen--Macaulay, and its minimal free resolution is given by the
Eagon--Northcott complex, hence the ideal has a linear resolution;
see \cite{ConcaRNC}.

The following theorem shows that these three properties persist for
every intermediate choice of
$\delta\subseteq\{2,\ldots,n\}$, thereby interpolating between the
generic and Hankel cases.

\begin{Theorem}\label{prime+CM+linear}
One has:
\begin{enumerate}
\item[(i)] the ideal $I_n(\delta)\subset S_\delta$ is prime;
\item[(ii)] the quotient ring $S_\delta/I_n(\delta)$ is Cohen--Macaulay;
\item[(iii)] $I_n(\delta)$ has a linear resolution.
\end{enumerate}
\end{Theorem}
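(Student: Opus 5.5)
The plan is to reduce all three statements to a single height computation and then invoke the Eagon--Northcott complex. $I_n(\delta)$ is the ideal of maximal minors of a $2\times n$ matrix whose entries are variables, so $\height I_n(\delta)\le n-1$. If equality holds, the Eagon--Northcott complex of $X_n(\delta)$ is a minimal free resolution of $S_\delta/I_n(\delta)$. Since the entries are linear forms, this complex is linear, of length $n-1$. This gives (iii), and by Auslander--Buchsbaum it gives (ii) as well, because the projective dimension equals the height. So the first goal is $\height I_n(\delta)=n-1$.

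To get this I would prove, by induction on $n$, the auxiliary claim
\[
\height\bigl(I_n(\delta)+(x_n)\bigr)\ \ge\ n ,
\]
which forces $\height I_n(\delta)\ge n-1$. Modulo $x_n$, the last column of $X_n(\delta)$ becomes $(0,\xi_n)^T$. Hence $I_n(\delta)+(x_n)$ contains $x_n$, the products $x_i\xi_n$ for $i<n$, and $I_{n-1}(\delta')$, where $\delta'=\delta\cap\{2,\dots,n-1\}$. Let $P$ be a prime containing this ideal. I would split into three cases.
\begin{enumerate}
\item[(a)] If $x_1,\dots,x_{n-1}\in P$, then $P$ contains $x_1,\dots,x_n$, so $\height P\ge n$.
\item[(b)] If $\xi_n=y_n\in P$, then $P\supseteq I_{n-1}(\delta')+(x_n,y_n)$. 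The variables $x_n,y_n$ do not occur in $I_{n-1}(\delta')$, and the inductive bound $\height I_{n-1}(\delta')\ge n-2$ gives $\height P\ge n$.
\item[(c)] If $\xi_n=x_{n-1}\in P$, then $P\supseteq \bigl(I_{n-1}(\delta')+(x_{n-1})\bigr)+(x_n)$. The inductive claim gives height $\ge n-1$ for the bracket, and $x_n$ is a new variable, so $\height P\ge n$.
\end{enumerate}
The base case $n=1$ is trivial. The bookkeeping of which variables are genuinely new is the delicate point; it uses that $x_n$ never appears in the second row.

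For primality (i) I would localize at $x_n$. In $S_\delta[x_n^{-1}]$ the ideal is generated by the elements $\xi_i-x_i\xi_n/x_n$ for $i<n$, because every minor lies in the ideal generated by the minors involving column $n$. The variables $\xi_1,\dots,\xi_{n-1}$ are pairwise distinct, and each differs from $x_n$ and from $\xi_n$. I would solve these equations for $\xi_{n-1},\xi_{n-2},\dots,\xi_1$ in decreasing order. In each case $\xi_i$ is expressed in terms of $\xi_n$, $x_n^{\pm1}$ and $x_i$, and $x_i$ itself is either a free variable or an already-eliminated $\xi_{i+1}$. Thus the localized quotient is a localization of a polynomial ring, hence a domain. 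It remains to show that $x_n$ is a nonzerodivisor modulo $I_n(\delta)$. By (ii), $S_\delta/I_n(\delta)$ is Cohen--Macaulay, so $I_n(\delta)$ is unmixed and every associated prime has height $n-1$. The auxiliary claim shows that no such prime contains $x_n$. Hence $I_n(\delta)$ is the contraction of a prime ideal, and is prime.

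I expect the main obstacle to be the height induction, specifically case (c), where the $\delta$-pattern makes the last column's bottom entry an earlier top-row variable. This is exactly why the stronger statement involving $+(x_n)$ has to be carried through the induction, rather than the height of $I_n(\delta)$ alone.
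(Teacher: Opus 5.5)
Your proof is correct, and it takes a genuinely different route from the paper.

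\textbf{Parts (ii) and (iii).} The paper never computes a height directly. It identifies $S_\delta/I_n(\delta)$ with the quotient of the generic ring $S/I_n(\emptyset)$ by the linear forms $x_{i-1}-y_i$, $i\in\delta$. It then asserts, citing an external theorem, that these forms are a regular sequence, and transfers Cohen--Macaulayness and the $h$-vector (hence linearity) from the generic case. Since $S/I_n(\emptyset)$ is Cohen--Macaulay of dimension $n+1$, that regular-sequence claim is equivalent to $\operatorname{height} I_n(\delta)=n-1$. So your induction on $\operatorname{height}\bigl(I_n(\delta)+(x_n)\bigr)\ge n$ is in effect a self-contained proof of exactly the fact the paper outsources. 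Your case split (a)--(c) is exhaustive. The step from height at least $n$ for $I_n(\delta)+(x_n)$ to height at least $n-1$ for $I_n(\delta)$ is legitimate because $S_\delta$ is a polynomial ring. Once the height is known, Eagon--Northcott gives (ii) and (iii) as directly as the paper's specialization argument does.

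\textbf{Part (i).} The paper uses a Gr\"obner/toric argument. It shows $(\Omega)\subseteq \mathrm{in}_{<_{\rm rev}^*}(I_n(\delta))$ and builds an explicit monomial map $\varphi$ into a Laurent polynomial ring. A fairly delicate case analysis then shows that the standard monomials outside $(\Omega)$ have linearly independent images; this requires a modified $\varphi$ when $2\in\delta$. Together these force $I_n(\delta)=\operatorname{Ker}\varphi$.

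Your argument has two pieces:
\begin{enumerate}
\item[$\bullet$] After inverting $x_n$, triangular elimination of $\xi_{n-1},\dots,\xi_1$ turns the quotient into a localized polynomial ring.
\item[$\bullet$] $x_n$ is a nonzerodivisor, because the unmixed ideal $I_n(\delta)$ has all associated primes of height $n-1$, while $I_n(\delta)+(x_n)$ has height at least $n$.
\end{enumerate}
This is shorter and avoids all of the monomial bookkeeping. The trade-off is that your proof of (i) depends on (ii), whereas the paper proves primality independently.

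\textbf{What the paper's route gives in addition.} It identifies the initial ideal, which is generated by quadratic monomials, so $I_n(\delta)$ has a quadratic Gr\"obner basis and the quotient is Koszul. It also exhibits $I_n(\delta)$ explicitly as a toric ideal through $\varphi$. Your argument does not recover either of these facts.
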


We begin by proving parts (ii) and (iii) of
Theorem~\ref{prime+CM+linear}. The idea is to obtain
$I_n(\delta)$ from the generic determinantal ideal $I_n(\emptyset)$
by imposing the linear relations
\[
x_{i-1}-y_i,\qquad i\in\delta.
\]
We first record the elementary identities that make this reduction precise. Let $i\notin\delta$ and $j\in\delta$. Then
\[
(x_i x_{j-1}-x_jy_i,\,x_{j-1}-y_j)
=
(x_i y_j-x_jy_i,\,x_{j-1}-y_j).
\]
Similarly, if $i,j\notin\delta$ with $i\neq j$, then
\begin{eqnarray*}
&&
(x_i x_{j-1}-x_jx_{i-1},\,
  x_{i-1}-y_i,\,
  x_{j-1}-y_j)
\\
&=&
(x_i y_j-x_jy_i,\,
  x_{i-1}-y_i,\,
  x_{j-1}-y_j).
\end{eqnarray*}
It follows that
\[
S/(I_n(\emptyset),\{x_{i-1}-y_i:i\in\delta\})
=
S/(I_n(\delta),\{x_{i-1}-y_i:i\in\delta\}).
\]

\begin{Lemma}
\label{regular_sequence}
One has
\[
S/(I_n(\emptyset),\{x_{i-1}-y_i:i\in\delta\})
\cong S_\delta/I_n(\delta).
\]
\end{Lemma}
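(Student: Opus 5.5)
The plan is to use the identity recorded just before the statement,
\[
S/(I_n(\emptyset),\{x_{i-1}-y_i:i\in\delta\})
=
S/(I_n(\delta),\{x_{i-1}-y_i:i\in\delta\}),
\]
and then to eliminate the variables $y_j$, $j\in\delta$, using the linear forms $x_{j-1}-y_j$. Here $I_n(\delta)$ is regarded inside $S$ through the inclusion $S_\delta\subset S$; its generators involve only $x_1,\dots,x_n$ and the $y_i$ with $i\notin\delta$. First, I will check that the identity really holds, which is just the two displayed generator identities. Every $2$-minor of $X_n(\emptyset)$, say for columns $i<j$, falls into one of the cases: both $i,j\notin\delta$; exactly one lies in $\delta$; or both lie in $\delta$. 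In each case, replacing $y_j$ by $x_{j-1}$ modulo $x_{j-1}-y_j$, for $j\in\delta$, turns the minor of $X_n(\emptyset)$ into the corresponding minor of $X_n(\delta)$. The preceding text labels the cases slightly loosely (the last display should concern $i,j\in\delta$), but the computation is the same substitution in all cases. Hence the two ideals agree after adding the linear forms.

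Next, I will consider the $K$-algebra surjection $\varphi\colon S\to S_\delta$ that fixes $x_1,\dots,x_n$ and $y_i$ for $i\notin\delta$, and sends $y_j\mapsto x_{j-1}$ for $j\in\delta$. This is well defined because $j\ge 2$ for every $j\in\delta$. Its kernel is $L=(x_{j-1}-y_j:j\in\delta)$. To see this, note that after the linear change of coordinates $y_j\mapsto y_j-x_{j-1}$, $S$ is a polynomial ring over $S_\delta$ in the new variables $y_j-x_{j-1}$, and $\varphi$ is evaluation of these variables at $0$. Thus $S/L\cong S_\delta$.

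Finally, I will compute the image of $I_n(\delta)S+L$ under $\varphi$. Since $\varphi$ restricts to the identity on $S_\delta$, we have $\varphi(I_n(\delta)S)=I_n(\delta)$ and $\varphi(L)=0$. Therefore
\[
S/(I_n(\delta)S+L)\cong S_\delta/I_n(\delta).
\]
Combining this with the first step gives the lemma. The only step requiring care is the case check in the first step. Once that is verified, the isomorphism is formal.
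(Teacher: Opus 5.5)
Your proof is correct and follows essentially the same route as the paper: you identify $S/(x_{i-1}-y_i : i\in\delta)$ with $S_\delta$ via the substitution $y_j\mapsto x_{j-1}$ and observe that the image of the minors of $X_n(\emptyset)$ generates $I_n(\delta)$, and you add an explicit kernel computation via the coordinate change $y_j\mapsto y_j-x_{j-1}$. You are also right that the paper's second displayed identity should be stated for $i,j\in\delta$ rather than $i,j\notin\delta$.
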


\begin{proof}
Modulo the relations $x_{i-1}-y_i$, $i\in\delta$, each variable
$y_i$ with $i\in\delta$ is identified with $x_{i-1}$. Hence
\[
S/(\{x_{i-1}-y_i:i\in\delta\})\cong S_\delta.
\]
Under this identification, the image of $I_n(\emptyset)$ is precisely
$I_n(\delta)$, by the identities above. The assertion follows.
\end{proof}

We can now deduce the Cohen--Macaulay property and linearity of the
resolution from the corresponding properties of the generic
determinantal ideal.

\begin{Corollary}
\label{CM_linear}
The quotient ring $S_\delta/I_n(\delta)$ is Cohen--Macaulay and
$I_n(\delta)$ has a linear resolution.
\end{Corollary}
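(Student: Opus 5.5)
The plan is to deduce both properties from the generic case $\delta=\emptyset$ by cutting down with a regular sequence of linear forms. By Lemma~\ref{regular_sequence},
\[
S_\delta/I_n(\delta)\cong S/(I_n(\emptyset),\{x_{i-1}-y_i:i\in\delta\}).
\]
Set $R=S/I_n(\emptyset)$ and $\ell_i=x_{i-1}-y_i$ for $i\in\delta$. The sequence $\{\ell_i\}_{i\in\delta}$ consists of $|\delta|$ linear forms. It therefore suffices to show that this sequence is $R$-regular. Granting this, we argue as follows.

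\emph{Cohen--Macaulayness.} Since $R$ is Cohen--Macaulay and a quotient of a graded Cohen--Macaulay ring by a homogeneous regular sequence is again Cohen--Macaulay, $S_\delta/I_n(\delta)$ is Cohen--Macaulay.

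\emph{Linear resolution.} If $F$ is the (Eagon--Northcott, linear) minimal graded free resolution of $R$ over $S$ and $\ell$ is a linear form regular on $R$ (and on $S$), then $F\otimes S/(\ell)$ is a minimal graded free resolution of $R/\ell R$ over $S/(\ell)$. The reason is that $\Tor_i^S(R,S/(\ell))=0$ for $i>0$, which one sees by resolving $S/(\ell)$ by the Koszul complex on $\ell$. The graded shifts are unchanged, so the Betti table is preserved and linearity passes down. Iterating over $i\in\delta$, and using $S/(\ell_i:i\in\delta)\cong S_\delta$, gives a linear resolution of $I_n(\delta)$ over $S_\delta$.

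\emph{Regularity of the sequence (main obstacle).} Since $R$ is Cohen--Macaulay and graded, a sequence of homogeneous forms of positive degree is regular if and only if it drops the dimension by its length. Here $\dim R=2n-(n-1)=n+1$, so we need
\[
\dim S_\delta/I_n(\delta)=n+1-|\delta|.
\]
The upper bound $\dim S_\delta/I_n(\delta)\ge n+1-|\delta|$ is automatic by Krull's principal ideal theorem. For the reverse inequality I would find a lower bound on the height of $I_n(\delta)$ in $S_\delta$, which has $2n-|\delta|$ variables; we need $\height I_n(\delta)\ge n-1$.

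\emph{Approach to the height bound.} I would use a Gr\"obner degeneration. Choose a term order such that, for each pair $i<j$ of columns of $X_n(\delta)$, the initial term of the minor is $x_i\xi_j$, the "diagonal" product, as in the generic and Hankel cases. The plan is to check that the height of the initial ideal is at least $n-1$, since $\height \ini(I)=\height I$. Concretely, one would exhibit that $\dim S_\delta/\ini I_n(\delta)\le n+1-|\delta|$ by showing that every prime containing the diagonal monomials contains $n-1$ of the variables. Equivalently, the minimal vertex covers of the graph with edges $\{x_i,\xi_j\}$, $i<j$, have size at least $n-1$.

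A simpler alternative is to localize at $x_1$. Inverting $x_1$, the minors involving column $1$ let us solve for $\xi_j$ (when $j\notin\delta$) or impose relations among the $x$'s for $j\in\delta$. This yields $n-1$ independent conditions, so the variety has dimension at most $n+1-|\delta|$ on $x_1\neq 0$. The locus $x_1=0$ is then handled by induction on $n$, deleting the first column. I expect this dimension count to be the only real work; everything else is formal.
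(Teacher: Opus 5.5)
Your argument is correct and follows the paper's skeleton: reduce to the generic case via Lemma~\ref{regular_sequence}, and show that the forms $x_{i-1}-y_i$, $i\in\delta$, are a regular sequence on $R=S/I_n(\emptyset)$. The differences lie in how the two sub-steps are justified.

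\emph{Regularity.} The paper simply invokes \cite[Theorem~4.1]{Eis}. You make this step self-contained: by the Cohen--Macaulay criterion it suffices that $\height I_n(\delta)\ge n-1$ in $S_\delta$. This follows from $\height \ini(I)=\height I$ together with $\ini(I_n(\delta))\supseteq(x_i\xi_j: i<j)$. These monomials are exactly the set $\Omega$ of Section~1. You need only the trivial inclusion $(\Omega)\subseteq\ini(I_n(\delta))$, which holds because these are initial terms of generators. The equality in Corollary~\ref{toric_ideal}, which the paper needs for primality, is not required.

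\emph{Linearity.} The paper argues through the $h$-vector, implicitly using that for a Cohen--Macaulay quotient a linear resolution is detected by the $h$-vector. Your base change of the Eagon--Northcott resolution along a regular linear form is more direct. It also shows that the graded Betti numbers of $I_n(\delta)$ and $I_n(\emptyset)$ coincide.

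Two things should be tightened.

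First, write out the term order and the vertex-cover bound; both are short.
\begin{itemize}
\item \emph{Term order.} The revlex order of Section~1, $y_1<x_1<y_2<\cdots<y_n<x_n$, works. In $x_i\xi_j-x_j\xi_i$ with $i<j$ the smallest variable is $\xi_i$, so the leading term is $x_i\xi_j$.
\item \emph{Cover bound.} Let $C$ be a set of variables of $S_\delta$ meeting every $\{x_i,\xi_j\}$ with $i<j$. When $j=i+1\in\delta$ this pair is the ``loop'' $x_i^2$, which forces $x_i\in C$. If $x_1,\dots,x_{n-1}\in C$ you are done. Otherwise let $p\le n-1$ be minimal with $x_p\notin C$. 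Then $\xi_j\in C$ for all $j>p$, which forces $p+1\notin\delta$. Hence each such $\xi_j$ is either a $y$-variable or $x_{j-1}$ with $j-1>p$. So $\{x_i:i<p\}\cup\{\xi_j:j>p\}$ consists of $(p-1)+(n-p)=n-1$ distinct variables of $C$.
\end{itemize}

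Second, either drop the localization alternative or repair it. When $2\in\delta$, setting $x_1=0$ also kills $\xi_2=x_1$. Deleting the first column then leaves a matrix with a zero entry, not one of the form $X_{n-1}(\delta')$, so the induction as stated does not close. The Gr\"obner route has no such issue.
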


\begin{proof}
Recall that $S/I_n(\emptyset)$ is Cohen--Macaulay and
$I_n(\emptyset)$ has a linear resolution. Since
\[
\{x_{i-1}-y_i:i\in\delta\}
\]
is a regular sequence of $S/I_n(\emptyset)$
(\cite[Theorem~4.1]{Eis}), Lemma~\ref{regular_sequence} implies that
$S_\delta/I_n(\delta)$ is Cohen--Macaulay. Furthermore, the $h$-vector of $S_\delta/I_n(\delta)$ coincides with
that of $S/I_n(\emptyset)$. Since $I_n(\emptyset)$ has a linear
resolution, it follows that $I_n(\delta)$ has a linear resolution.
\end{proof}

We now turn to a proof of (i) of Theorem \ref{prime+CM+linear}.
Let $<_{\rm rev}$ denote the reverse lexicographic order of $S$ induced by the ordering of the variables 
\[
y_1 < x_1 < y_2 < x_2 < y_3 < \cdots < y_{n} < x_n
\]
and $<_{\rm rev}^*$ the restriction of $<_{\rm rev}$ on $S_\delta$.  Let ${\rm in}_{<_{\rm rev}^*}(I_n(\delta))$ denote the initial ideal of $I_n(\delta)$ with respect to $<_{\rm rev}^*$.  Let $\Omega=A \cup B$, where 
    \begin{eqnarray*}
&A=\{x_k y_\ell:1 \leq k \leq n, \ell \not\in \delta, k < \ell\},& \\
&B=\{x_k x_{\ell-1}:1 \leq k \leq n, \ell \in \delta, k < \ell\}.&
    \end{eqnarray*}
and $(\Omega) \subset S_\delta$ the monomial ideal generated by those monomials belonging to $\Omega$.
\begin{Lemma}
    One has $(\Omega) \subset {\rm in}_{<_{\rm rev}^*}(I_n(\delta))$.
\end{Lemma}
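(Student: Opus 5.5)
The plan is to show that every monomial in $\Omega$ is the initial monomial, with respect to $<_{\rm rev}^*$, of a single $2$-minor of $X_n(\delta)$. Concretely, for $1\le k<\ell\le n$ I will take the minor
\[
f_{k\ell}=x_k\xi_\ell-x_\ell\xi_k
\]
formed by columns $k$ and $\ell$. I will then check that ${\rm in}_{<_{\rm rev}^*}(f_{k\ell})=x_k\xi_\ell$. If $\ell\notin\delta$ this monomial is $x_ky_\ell\in A$, and if $\ell\in\delta$ it is $x_kx_{\ell-1}\in B$. Since $f_{k\ell}\in I_n(\delta)$, every element of $A\cup B$ is then an initial monomial of an element of $I_n(\delta)$, which gives $(\Omega)\subset{\rm in}_{<_{\rm rev}^*}(I_n(\delta))$. (In $A$ and $B$ one has $k<\ell\le n$, so this covers all of $\Omega$.)

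The key tool is the standard description of reverse lexicographic order for two monomials of the same degree: $u>v$ if and only if, at the smallest variable where their exponents differ, $u$ has the smaller exponent. In particular, if some variable smaller than every variable of $u$ divides $v$ but not $u$, then $u>v$. So it suffices to show that $\xi_k$ is strictly smaller than each of $x_k$, $\xi_\ell$ and $x_\ell$, and that $\xi_k$ does not divide $x_k\xi_\ell$.

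First, $1\notin\delta$ forces $\xi_1=y_1$, and for $k\ge2$ we have $\xi_k\in\{y_k,x_{k-1}\}$. In all cases $\xi_k<x_k$ in the order $y_1<x_1<y_2<x_2<\cdots$. Since $k<\ell$, clearly $\xi_k<x_\ell$. For $\xi_\ell$, there are two cases. If $\ell\notin\delta$, then $\xi_\ell=y_\ell>x_k$. If $\ell\in\delta$, then $\xi_\ell=x_{\ell-1}\ge x_k$, since $k\le\ell-1$; the case $k=\ell-1$ gives the square $x_{\ell-1}^2$. Either way $\xi_\ell\ge x_k>\xi_k$.

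Hence $\xi_k$ is the smallest variable occurring in $f_{k\ell}$ and occurs only in the term $x_\ell\xi_k$. The two terms are therefore distinct, so $f_{k\ell}\neq0$, and the leading term is $x_k\xi_\ell$. The only step needing care is this bookkeeping of variable orders, especially the boundary cases $k=1$ and $k=\ell-1$. I do not expect a genuine obstacle there.
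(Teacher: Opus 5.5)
Your proposal is correct. The paper states this lemma without writing out a proof, and the evident intended argument is exactly yours: $x_k\xi_\ell$ is the $<_{\rm rev}^*$-leading term of the $2$-minor on columns $k<\ell$, because $\xi_k$ is the smallest variable occurring in that minor and it divides only the term $x_\ell\xi_k$. Your verification of the variable comparisons, including the boundary cases $k=1$ and $k=\ell-1$, is complete.
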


Suppose that $2 \not\in \delta$ and define the ring homomorphism $$\varphi:S_\delta \to K[y_1,x_1^{-1}, \{x_i: i+1 \not\in \delta\}]$$ by setting
\begin{itemize}
    \item 
    $\varphi(y_1)=y_1$,
    \item 
    $\varphi(x_i)=x_i, \, i+1 \not\in \delta$,
    \item 
    $\varphi(y_j)=(y_1/x_1)^{i_0-j+1}x_{i_0}, \, \text{where $1 < j \not\in \delta$ and $i_0$ is the smallest $i$ with $i+1 \not\in \delta$ and  $j \leq i$}$,
    \item
    $\varphi(x_j)=(y_1/x_1)^{i_0-j}x_{i_0}, \, \text{where $j+1 \in \delta$ and $i_0$ is the smallest $i$ with $i+1 \not\in \delta$ and  $j < i$}$.
\end{itemize}

For example, if $n=7$ and $\delta=\{3,4,6\}$, then
\begin{eqnarray*}
&&\varphi\left(
\begin{bmatrix}
   x_1 & x_2 & x_3 & x_4 & x_5 & x_6 & x_7 \\
   y_1 & y_2 & x_2 & x_3 & y_5 & x_5 & y_7  
\end{bmatrix}
\right) \\
&&=
\begin{bmatrix}
   x_1 & (y_1/x_1)^2x_4 & (y_1/x_1)x_4 & x_4 & (y_1/x_1)x_6 & x_6 & x_7 \\   y_1 & (y_1/x_1)^3x_4 & (y_1/x_1)^2x_4 & (y_1/x_1)x_4 & (y_1/x_1)^2x_6 & (y_1/x_1)x_6 & (y_1/x_1)x_7  
\end{bmatrix}.
\end{eqnarray*}

One has 
\[
I_n(\delta) \subset {\rm Ker}(\varphi), \quad {\rm in}_{<_{\rm rev}^*}(I_n(\delta)) \subset {\rm in}_{<_{\rm rev}^*}({\rm Ker}(\varphi)).
\]
Let $\Mc_\delta$ denote the set of monomials of $S_\delta$ and 
\[
\Omega^\# := \{u \in \Mc_\delta : u \not\in (\Omega)\}.
\]
\begin{Lemma}
    \label{linear_independent}
The set of Laurent monomials $\{\varphi(u) : u \in \Omega^\# \}$ is linearly independent. 
\end{Lemma}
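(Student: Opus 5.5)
Since distinct Laurent monomials are linearly independent over $K$, and $\varphi$ sends monomials to Laurent monomials, it suffices to show that $\varphi$ is injective on $\Omega^\#$: if $u,v\in\Omega^\#$ and $\varphi(u)=\varphi(v)$, then $u=v$.

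\emph{Step 1: block structure of $\varphi$.} Decompose $\{1,\ldots,n\}$ into maximal blocks $\{j,j+1,\ldots,i_0\}$ with $j\notin\delta$, $j+1,\ldots,i_0\in\delta$ and $i_0+1\notin\delta$. The first block is $\{1\}$, because $2\notin\delta$. The variables of $S_\delta$ attached to the block are $y_j,x_j,\ldots,x_{i_0}$. Put $t=y_1/x_1$. For a block other than the first, $\varphi$ sends each of these variables to $t^{p}x_{i_0}$, where $p$ is its distance from the end of the chain $y_j,x_j,\ldots,x_{i_0}$. Explicitly, $p=i_0-j+1$ for $y_j$ and $p=i_0-m$ for $x_m$, so $x_{i_0}$ has $p=0$. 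Hence, for a monomial $u$,
\[
\varphi(u)=y_1^{a}x_1^{c}\,t^{e(u)}\prod_{b}x_{i_b}^{d_b(u)},
\]
where $a,c$ are the exponents of $y_1,x_1$ in $u$, $d_b(u)$ is the total degree of $u$ in the variables of block $b$, and $e(u)$ is the sum of the $p$-weights. Since $x_1$ and the $x_{i_b}$ are distinct variables, $\varphi(u)$ determines each $d_b(u)$ (for $b\ne$ first block), as well as $a+e(u)$ and $c-e(u)$. Moreover $\deg u=a+c+\sum_b d_b$ is then determined, so $a+c$ is known.

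\emph{Step 2: shape of standard monomials.} The generators of $\Omega$ are exactly the monomials $x_k\xi_\ell$ with $k<\ell$, where $\xi_\ell$ is the second-row entry. So $u\in\Omega^\#$ if and only if every first-row variable $x_k$ dividing $u$ and every second-row variable $\xi_\ell$ dividing $u$ satisfy $\ell\le k$. Care is needed because $x_{m}=\xi_{m+1}$ when $m+1\in\delta$; such a variable plays both roles.

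This gives a ``cut'' description of a standard monomial. Inside a non-initial block, the variables of $u$ lie in an initial segment of the chain read as second-row entries, or sit at a single position, interacting with first-row variables lying to the right. Globally, $u$ is supported on a set of variables that is ``sorted'': all pure second-row indices are at most all first-row indices. I would make this precise as follows. Fix a total order on positions. Then the support of $u$ in each block is at most two adjacent chain variables, except for blocks lying strictly on one side of the cut, where the support is one extreme variable ($y_j$ on the second-row side, $x_{i_0}$ on the first-row side).

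\emph{Step 3: recovery, the main obstacle.} Given the data $(d_b)$, $a+e$, $c-e$ and $a+c$, I must reconstruct $u\in\Omega^\#$. I expect this to be the main obstacle. The approach is a monotonicity argument. As the cut moves to the right, blocks switch from contributing their maximal weight (all degree on $y_j$) to contributing weight $0$ (all degree on $x_{i_0}$). The block containing the cut contributes an intermediate weight, determined by how its degree $d_b$ splits between two adjacent chain variables.

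Here the first block is a further unknown, since $a$ and $c$ enter only through $a+e$ and $c-e$. A nonzero $c$ (a factor $x_1$) forces all other variables to be second-row variables of index $\le1$, hence $u=y_1^ax_1^c$ and $e=0$. So either $u$ is a pure power product in $y_1,x_1$, which is recovered directly, or $c=0$, in which case $e$ is read off from $c-e$ and then $a$ from $a+e$. In the latter case, $e(u)=\sum_b(\text{weight of block }b)$, and the weights are lexicographically monotone in the cut position together with the splitting. So I would show that the map from (cut position, splitting) to $e$ is strictly increasing given the fixed $d_b$'s; this yields uniqueness of $u$. I would first test this on the example $n=7$, $\delta=\{3,4,6\}$ to make sure the boundary cases (a block with $d_b=0$, and a cut exactly between blocks) do not create collisions.
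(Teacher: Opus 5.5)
Your Step~1 is fine. With $t=y_1/x_1$, $\varphi(u)$ records exactly the block degrees $d_b(u)$ and the total $t$-weight, so the lemma amounts to showing that these data determine $u\in\Omega^\#$. The first block needs no special treatment, since $y_1=t\,x_1$ makes it the chain $y_1,x_1$ of length one.

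The genuine gap is Step~2: your description of $\Omega^\#$ is false, and Step~3, which you only announce rather than carry out, is built on it. Inside a block, the support of a standard monomial need not be two adjacent chain variables, and monomials of that shape need not be standard. Take $n=7$, $\delta=\{3,4,6\}$ and the chain $y_2,x_2,x_3,x_4$ (weights $3,2,1,0$).
\begin{itemize}
\item The monomials $x_2^2$, $x_2x_3$, $x_3^2$ lie in $B$ (take $\ell=3,k=2$ and $\ell=4,k=2,3$).
\item The monomials $y_2x_3$, $y_2x_4$, $x_2x_4$ lie in $\Omega^\#$.
\end{itemize}
For instance $\varphi(x_2x_3)=\varphi(y_2x_4)=t^3x_4^2$, and your normal form would select $x_2x_3\in\Omega$. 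For fixed block degrees, your ``cut plus splitting between adjacent chain variables'' family also has exactly one member per weight. So the monotonicity you have in mind would hold for it, but it proves injectivity on the wrong set of monomials.

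Likewise, $x_1\mid u$ does not force $u=y_1^ax_1^c$: $x_1x_{i_b}\in\Omega^\#$ for every block end $i_b$ (e.g.\ $x_1x_4$). Your conclusion $e(u)=0$ survives only because block ends have weight $0$.

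The missing idea is the one the paper's proof relies on when it speaks of ``the unique integer $i$ for which $x_i\in\supp(v)$ and $i+1\in\delta$''. If $m+1\in\delta$, then $x_m^2\in B$ and $x_kx_m\in B$ for all $k<m$. Hence a standard monomial contains at most one interior chain variable, to the first power, and that variable carries the smallest $x$-index in $u$. Combine this with the condition from $A$ (if $y_\ell\mid u$ and $x_k\mid u$ then $\ell\le k$). Writing block $b$ as $\{j_b,\ldots,i_b\}$ with $L_b=i_b-j_b+1$, this shows that $\Omega^\#$ consists exactly of the monomials
\[
\textstyle\prod_{b<c}y_{j_b}^{d_b}\cdot y_{j_c}^{\alpha}\,x_m^{\epsilon}\,x_{i_c}^{\beta}\cdot\prod_{b>c}x_{i_b}^{d_b},\qquad \alpha+\epsilon+\beta=d_c,\ \epsilon\in\{0,1\},
\]
where $j_c\le m<i_c$ whenever $\epsilon=1$. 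The $t$-weight of such a monomial is $\sum_{b<c}d_bL_b+\alpha L_c+\epsilon(i_c-m)$ with $0<i_c-m<L_c$.

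For fixed $(d_b)$ you recover the cut block $c$ from the partial sums $\sum_{b<c}d_bL_b$, and then $\alpha,\epsilon,m$ by division with remainder by $L_c$. The only coincidences produce the same monomial: $\alpha=d_c$ at cut $c$ versus $\alpha=\epsilon=0$ at the next cut, and blocks with $d_b=0$. With this correction your invariant-based argument becomes a complete proof. It is arguably more transparent than the paper's case analysis according to whether the supports lie in $V_\delta$.
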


\begin{proof}
Let $\supp{(u)}$ denote the support of a monomial $u \in S_\delta$.  Our task is to prove that if $u,v \in \Omega^\#$ and if $u$ and $v$ are relatively prime, then $\varphi(u) \neq \varphi(v)$.  Set 
\[
V_\delta := \{x_i : i+1 \not\in \delta\} \cup \{y_i : i \not\in \delta\}.
\]
Let
\[
\{x_i : i+1 \not\in \delta\} = \{x_1, x_{i_1}, \ldots, x_{i_{n-|\delta|-1}}\}, \quad 1 < i_1 < \cdots < i_{n-|\delta|-1}
\]
and
\[
\{y_i : i \not\in \delta\} = \{y_1, y_{j_1}, \ldots, y_{j_{n-|\delta|-1}}\}, \quad 1 < j_1 < \cdots < j_{n-|\delta|-1}.
\]
One has 
\[
\varphi(y_{j_k}) = (y_1/x_1)^{i_k - j_k + 1} x_{i_k}, \quad k = 2,\ldots, n-|\delta|-1.
\]

\medskip

\noindent
{\bf (First Step)}  If $\supp(u) \cup \supp(v) \subset V_\delta$, then the desired result follows from \cite[p.~98]{Hibi}.   



\medskip

\noindent
{\bf (Second Step)}  Let $\supp(u) \subset V_\delta$ and $\supp(v) \not\subset V_\delta$.  Let $i_0$ denote the unique integer $i$ for which $x_{i} \in \supp(v)$ and $i+1 \in \delta$.  One has 
\[
\supp(v) \subset \{x_{i_0}\} \cup \{x_i \in V_\delta: i > i_0\} \cup \{y_j \in V_\delta: j \leq i_0 \} .
\]
Let $i^* > i_0$ denote the smallest integer $i$ for which $x_i \in V_\delta$ and $i_* < i_0$ the biggest integer $i$ for which $y_i \in V_\delta$.  
Let $x_{i^*} \in \supp(u)$.  Since $y_j \not\in \supp(u)$ for $j > i_0$, it follows that $\varphi(u) \neq \varphi(v)$. 

Let $x_{i^*} \not\in \supp(u)$.  Then $y_{i_*} \in \supp(u)$.  If $\supp(u) \not\subset \{y_i : i \not\in \delta \}$, then clearly $\varphi(u) \neq \varphi(v)$.  If $\supp(u) \subset \{y_i : i \not\in \delta \}$, then considering the powers of $y_1/x_1$, one has $\varphi(u) \neq \varphi(v)$. 

\medskip

\noindent
{\bf (Third Step)}  Let $\supp(u) \not\subset V_\delta$ and $\supp(v) \not\subset V_\delta$.  Let $i_u$ (resp. $i_v$) denote the unique integer $i$ for which $x_{i} \in \supp(u)$ (resp. $x_{i} \in \supp(v)$) and $i+1 \in \delta$.  Let $i_u < i_v$.

If there is no $y_j$ with $i_u < j < i_v$, then, since 
\[
(\supp(u) \cup \supp(v) )\cap V_\delta \subset \{x_i \in V_\delta : i > i_v\} \cup \{y_j : j \leq i_u\}, 
\]
one has $\varphi(u) \neq \varphi(v)$.

Suppose that there is $y_j$ with $i_u < j < i_v$.  Let $\varphi(x_{i_u}) = (y_1/x_1)^a x_{i_*}$ and $\varphi(x_{i_v}) = (y_1/x_1)^b x_{i^*}$, where $i_* < i^*$.  Let $\varphi(y_{j_*}) = (y_1/x_1)^{a'} x_{i_*}$ and $\varphi(y_{j^*}) = (y_1/x_1)^{b'} x_{i^*}$, where $j_* < j^*$.  Now, since $x_{i^*} \in \supp(u)$ and $y_{j_*} \in \supp(v)$,  considering the powers of $y_1/x_1$, one has $\varphi(u) \neq \varphi(v)$. 
\, \, \, 
\end{proof}

When $2 \in \delta$, since $x_1 \not\in V_\delta$, slightly modifying the map $\varphi$ is required.  Let $q \geq 2$ denote the smallest integer $i$ for which $x_i \in V_\delta$.  We then define $\varphi: S_\delta \to K[y_1, x_{q}^{-1}, \{x_i : i+1 \not\in \delta\}]$ by starting with $\varphi(y_1) = y_1^{q}, \varphi(x_1) = x_{q}y_1^{q-1}$ and $\varphi(x_i) = x_i^{q}$ if $x_i \in V_\delta$.  More precisely,  

\begin{itemize}
    \item 
    $\varphi(y_1)=y_1^q$,
    \item 
    $\varphi(x_1) = x_{q}y_1^{q-1}$,
    \item 
    $\varphi(x_i)=x_i^q, \, i+1 \not\in \delta$,
    \item 
    $\varphi(y_j)=(y_1/x_q)^{i_0-j+1}x_{i_0}^q, \, \text{where $1 < j \not\in \delta$ and $i_0$ is the smallest $i$ with $i+1 \not\in \delta$ and  $j \leq i$}$,
    \item
    $\varphi(x_j)=(y_1/x_q)^{i_0-j}x_{i_0}^q, \, \text{where $j+1 \in \delta$ and $i_0$ is the smallest $i$ with $i+1 \not\in \delta$ and  $j < i$}$.
\end{itemize}
Our proof of Lemma \ref{linear_independent} can be valid with slight modifications.

For example, if $n=5$ and $\delta=\{2,4\}$, then $q=2$ and
\begin{eqnarray*}
&&\varphi\left(
\begin{bmatrix}
   x_1 & x_2 & x_3 & x_4 & x_5  \\
   y_1 & x_1 & y_3 & x_3 & y_5   
\end{bmatrix}
\right) \\
&&=
\begin{bmatrix}
   x_2 y_1& x_2^2 & (y_1/x_2)x_4^2 & x_4^2 & x_5^2  \\   
   y_1^2 & x_2 y_1 & (y_1/x_2)^2x_4^2 & (y_1/x_2)x_4^2 & (y_1/x_2)x_5^2   
\end{bmatrix}.
\end{eqnarray*}

Now, a standard fact on Gr\"obner basis \cite[Proposition 2.2.5]{HHgtm260} guarantees that 

\begin{Corollary}
\label{toric_ideal}
One has $$(\Omega) = {\rm in}_{<_{\rm rev}^*}(I_n(\delta)) = {\rm in}_{<_{\rm rev}^*}({\rm Ker}(\varphi)$$ and $I_n(\delta) = {\rm Ker}(\varphi)$.  In particular, $I_n(\delta)$ is prime.
\end{Corollary}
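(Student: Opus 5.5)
We apply the standard criterion \cite[Proposition~2.2.5]{HHgtm260} to the chain of ideals
\[
I_n(\delta)\subset {\rm Ker}(\varphi),\qquad (\Omega)\subset {\rm in}_{<_{\rm rev}^*}(I_n(\delta))\subset {\rm in}_{<_{\rm rev}^*}({\rm Ker}(\varphi)).
\]
The first inclusion on the right is the earlier lemma, and the second follows from $I_n(\delta)\subset{\rm Ker}(\varphi)$. The containment $I_n(\delta)\subset{\rm Ker}(\varphi)$ is checked directly: in the image matrix $\varphi(X_n(\delta))$ every column has the form $(c,\,(y_1/x_1)c)$ (resp. $(c,\,(y_1/x_q)c)$ when $2\in\delta$), apart from the first column $(x_1,y_1)$, which has the same ratio. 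Hence all $2$-minors vanish.

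The key step is to show that $\Omega^\#$ spans no nonzero element of ${\rm Ker}(\varphi)$. Suppose $f\in{\rm Ker}(\varphi)$ is nonzero and ${\rm in}(f)\notin(\Omega)$. First reduce $f$ modulo the ideal $(\Omega)$ via a Gröbner basis of $I_n(\delta)$. More simply, replace $f$ by its normal form with respect to ${\rm in}(I_n(\delta))$: since $I_n(\delta)\subset{\rm Ker}(\varphi)$, subtracting elements of $I_n(\delta)$ keeps $f$ in the kernel. We may therefore assume that every monomial of $f$ lies outside ${\rm in}(I_n(\delta))\supset(\Omega)$, so all monomials of $f$ lie in $\Omega^\#$. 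Then $\varphi(f)=\sum c_u\varphi(u)=0$ is a nontrivial linear relation among the $\varphi(u)$ with $u\in\Omega^\#$, contradicting Lemma~\ref{linear_independent}. Hence every nonzero $f\in{\rm Ker}(\varphi)$ has a monomial in ${\rm in}(I_n(\delta))$. Applying this to $f$ itself rather than the normal form, the standard argument shows that no monomial outside $(\Omega)$ can be ${\rm in}(g)$ for any $g\in{\rm Ker}(\varphi)$. Concretely, take $g$ with ${\rm in}(g)\in\Omega^\#$, reduce its non-leading terms modulo $(\Omega)$ using elements of $I_n(\delta)$ (possible since $(\Omega)\subset{\rm in}(I_n(\delta))$), and keep the leading term, which is not divisible by any element of $\Omega$. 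After finitely many steps we get a kernel element supported entirely on $\Omega^\#$, which is impossible by Lemma~\ref{linear_independent}. Thus ${\rm in}({\rm Ker}(\varphi))\subset(\Omega)$, and equality holds throughout the chain.

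From ${\rm in}(I_n(\delta))={\rm in}({\rm Ker}(\varphi))$ together with $I_n(\delta)\subset{\rm Ker}(\varphi)$, we conclude $I_n(\delta)={\rm Ker}(\varphi)$, since nested ideals with equal initial ideals coincide. Finally, ${\rm Ker}(\varphi)$ is the kernel of a ring map into a Laurent polynomial ring, which is a domain, so $I_n(\delta)$ is prime. The main obstacle is the reduction step: the termination of the reduction of non-leading terms modulo $(\Omega)$ inside ${\rm Ker}(\varphi)$ has to be justified. This holds because each reduction strictly lowers the terms involved in the monomial order, which is a well-ordering. The case $2\in\delta$ is identical, using the modified $\varphi$.
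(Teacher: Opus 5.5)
Your proposal is correct and follows the paper's route: the chain $(\Omega)\subset{\rm in}_{<_{\rm rev}^*}(I_n(\delta))\subset{\rm in}_{<_{\rm rev}^*}({\rm Ker}(\varphi))$ together with Lemma~\ref{linear_independent}, where your ``Concretely'' reduction argument spells out the standard Gr\"obner basis fact that the paper only cites, and your column-ratio check supplies the inclusion $I_n(\delta)\subset{\rm Ker}(\varphi)$ that the paper merely asserts. Your earlier normal-form paragraph does not produce the contradiction it sets up, since the normal form may be $0$; read correctly, though, it shows directly that every $f\in{\rm Ker}(\varphi)$ reduces to $0$, i.e.\ ${\rm Ker}(\varphi)\subseteq I_n(\delta)$, so nothing is lost.
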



\section{Classification of PCML matrices}

Let $X$ be a $2\times n$ matrix of not necessarily distinct variables
belonging to a polynomial ring $S$ over a field $K$, and let $I$ be
the ideal generated by all $2$-minors of $X$. If a variable appears more than once in the same row, or if the two
entries of some column are equal, then $I$ is not prime. Therefore,
in classifying PCML matrices, we may restrict our attention to matrices
whose entries in each row are pairwise distinct and whose two entries
in every column are distinct. After relabeling the variables, we may thus write
\[
X=
\begin{bmatrix}
x_1 & x_2 & \cdots & x_n\\
z_1 & z_2 & \cdots & z_n
\end{bmatrix},
\]
where $z_1,\ldots,z_n$ are pairwise distinct elements of
$\{x_1,\ldots,x_n,y_1,\ldots,y_n\}$ and $z_i\neq x_i$ for
$i=1,\ldots,n$.

Toward the classification of PCML matrices, we first characterize those matrices whose ideals of $2$-minors are prime. For this purpose, we associate a directed graph to $X$. We first recall the graph-theoretic terminology that will be used.

Let $G$ be a directed graph. A \emph{directed path} in $G$ is a sequence of distinct vertices \[ v_1,v_2,\ldots,v_r \] such that $(v_i,v_{i+1})\in E(G)$ for each $i=1,\ldots,r-1$. In particular, an isolated vertex is regarded as a directed path. A \emph{directed cycle} of length $r$ in $G$ consists of a directed path
$v_1,v_2,\ldots,v_r$, with $r\geq 2$, which also satisfies
$(v_r,v_1)\in E(G)$. In particular, if $(u,v)$ and
$(v,u)$ belong to $E(G)$, then $u,v$ form a directed cycle of length $2$.

\begin{Definition}
{\em
Let
\[
X=
\begin{bmatrix}
x_1 & x_2 & \cdots & x_n\\
z_1 & z_2 & \cdots & z_n
\end{bmatrix},
\]
where $z_1,\ldots,z_n$ are pairwise distinct elements of
$\{x_1,\ldots,x_n,y_1,\ldots,y_n\}$ and $z_i\neq x_i$ for all
$i=1,\ldots,n$.
We associate to $X$ a directed graph $G_X$ on the vertex set
$\{1,\ldots,n\}$ as follows: for $1\le i,j\le n$, there is a directed edge
from $i$ to $j$ if and only if $x_i=z_j$.
}
\end{Definition}

For the directed graph $G_X$ associated with the matrix $X$, directed loops cannot occur, since a loop at the vertex $i$ would mean $x_i=z_i$, contrary to the assumption $z_i\neq x_i$. Hence $G_X$ has no directed cycles of length $1$.

The following remark describes the structure of the connected components of the graph $G_X$.

\begin{Remark}\label{rem:def}
{\em The graph $G_X$ has a particularly simple structure. By construction, every
vertex of $G_X$ has indegree at most one. Since the entries
$z_1,\ldots,z_n$ are pairwise distinct, every vertex has outdegree at most
one. Hence every connected component of $G_X$ is either a directed path or
a directed cycle.}
\end{Remark}

The following example illustrates the construction of the graph $G_X$.

\begin{Example}\label{example}
{\em 
(1)
Consider the matrix
\[
X=
\begin{bmatrix}
x_1 & x_2 & x_3 & x_4 & x_5\\
x_2 & x_1 & y_1 & x_3 & y_5
\end{bmatrix}.
\]
Then $G_X$ has vertex set $\{1,2,3,4,5\}$ and directed edges
\[
1\to2,\qquad
2\to1,\qquad
3\to4,
\]
since $x_1=z_2$, $x_2=z_1$, and $x_3=z_4$. Thus $G_X$ has three connected
components: a directed cycle on the vertices $\{1,2\}$, a directed path $3\to4$, and the isolated vertex $5$.  If $I$ denotes the ideal generated by the $2$-minors of $X$, then
\[
x_1^2-x_2^2=(x_1-x_2)(x_1+x_2)\in I,
\]
while neither factor belongs to $I$. Hence $I$ is not prime. 

(2) Consider the matrix
\[
Y=
\begin{bmatrix}
x_1 & x_2 & x_3 & x_4 & x_5 & x_6\\
z_1 & z_2 & x_1 & z_4 & x_2 & x_3
\end{bmatrix},
\]
where $z_1,z_2,z_4$ are distinct variables in
$\{y_1,\ldots,y_6\}$. Then $G_Y$ has three connected components,
namely the directed paths
\[
1\to3\to6,\qquad
2\to5,
\]
and the isolated vertex $4$. Relabeling the variables by setting
\[
z_1=y_1,\qquad
z_2=y_3,\qquad
z_4=y_6,
\]
and then reordering the columns according to the connected components,
we obtain the matrix
\[
\begin{bmatrix}
x_1 & x_3 & x_6 & x_2 & x_5 & x_4\\
y_1 & x_1 & x_3 & y_3 & x_2 & y_6
\end{bmatrix}.
\]
Hence, after relabeling the variables and permuting the columns of $Y$,
the resulting matrix is precisely $X_6(\delta)$, where
$\delta=\{2,3,5\}$. Consequently, if $J$ denotes the ideal generated by
the $2$-minors of $Y$, then $J$ is identified with the ideal
$I_6(\delta)$.
}
\end{Example}

The next lemma isolates the obstruction to primeness arising from a
directed cycle and will be used in the proof of Theorem~\ref{prime}.

\begin{Lemma}\label{lemma:cycle}
Let $R=K[u_1,\ldots,u_r]$ be a polynomial ring and
\[
U=
\begin{bmatrix}
u_1 & u_2 & \cdots & u_r\\
u_2 & u_3 & \cdots & u_1
\end{bmatrix}.
\]
Then the ideal generated by the $2$-minors of $U$ is not prime.
\end{Lemma}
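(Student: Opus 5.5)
Write $I$ for the ideal generated by the $2$-minors of $U$. Indices are read modulo $r$, so column $i$ of $U$ is $(u_i,u_{i+1})^{T}$. The plan is to imitate Example~\ref{example}(1): exhibit a product of two elements that lies in $I$, while neither factor does. The natural candidate comes from chaining the relations around the cycle.

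Modulo $I$, every column is proportional to every other. Formally, in the fraction field of $R/I$ (if $I$ were prime) all ratios $u_{i+1}/u_i$ would equal a common value $t$. Going once around the cycle gives $t^r=1$, so one expects an element of the form $u_1^r-u_2^r$, or something similar, to lie in $I$.

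\textbf{Step 1: a telescoping identity.} The minor on columns $i$ and $i+1$ is $u_iu_{i+2}-u_{i+1}^2$. Combining these, for instance by showing inductively that $u_1^{k}u_{k+1}\equiv u_2^{k}u_1$ modulo $I$, and closing up with $k=r$, should give
\[
u_1^{r}-u_2^{r}\in I .
\]
The case $r=2$ is exactly $u_1^2-u_2^2$, as in Example~\ref{example}(1). For general $r$, I would instead use the simpler route via the minors $u_iu_{j+1}-u_ju_{i+1}$ to deduce $u_1^{r}-u_2^{r}$ directly: multiply the proportionality relations $u_1u_{i+1}\equiv u_2u_i$ successively.

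\textbf{Step 2: factor it.} Write
\[
u_1^r-u_2^r=(u_1-u_2)\,(u_1^{r-1}+u_1^{r-2}u_2+\cdots+u_2^{r-1}).
\]
If $I$ were prime, one of these two factors would lie in $I$.

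\textbf{Step 3: neither factor lies in $I$.} Since $I$ is generated by quadrics, it contains no nonzero linear form, so $u_1-u_2\notin I$. For the second factor, when $r\ge 3$ it has degree $r-1\ge 2$, so a degree argument alone is not enough. I would specialize instead. A $K$-algebra map $R\to K[s,t]$ (or to $\overline{K}[t]$) killing $I$ must send the columns to proportional vectors. So set $u_i\mapsto \zeta^{i}t$, where $\zeta$ is a primitive $r$-th root of unity in an extension field. This kills $I$, and it sends the sum $\sum u_1^{r-1-k}u_2^{k}$ to $t^{r-1}\zeta^{r-1}\sum_k\zeta^{k}=0$, which is unhelpful. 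Instead, use $u_i\mapsto t$ for all $i$: this kills $I$, sends the second factor to $r\,t^{r-1}$, and sends $u_1-u_2$ to $0$. So the second factor is not in $I$ provided $\operatorname{char}K\nmid r$. Dually, $u_i\mapsto\zeta^i t$ shows $u_1-u_2\notin I$ whenever $\zeta\ne1$.

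\textbf{Main obstacle: characteristic dividing $r$.} If $\operatorname{char}K=p$ divides $r$, the second factor may vanish at all the ``rank one'' points. In that case I would replace $r$ by $r=p^a m$ with $p\nmid m$ and factor $u_1^r-u_2^r=(u_1^m-u_2^m)^{p^a}$. If $m\ge2$, the argument above applies to $u_1^m-u_2^m$. If $m=1$, then $(u_1-u_2)^{r}\in I$ while $u_1-u_2\notin I$, so $I$ is not even radical, hence not prime.

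Alternatively, and uniformly, one can compare dimensions. The variety of $I$ consists of the points with $u_i=\zeta^i\lambda$ for $\zeta^r=1$, a union of lines. Over $\overline K$ with $r$ not divisible by $p$, this gives $r\ge2$ distinct components, which already shows non-primality after checking that it persists over $K$ via the factorization above.
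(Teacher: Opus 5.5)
Your proof is correct. Steps 1 and 2 are the paper's argument. The paper's induction is $u_1^{k-1}u_{k+1}\equiv u_2^k$, which at $k=r$ gives $u_1^r-u_2^r\in J$ exactly. Your first suggested version, $u_1^ku_{k+1}\equiv u_2^ku_1$, only yields $u_1(u_1^r-u_2^r)\in I$. That still suffices, because under the primality hypothesis you may cancel $u_1\notin I$.

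The real difference is how you dispose of the cofactor $F=\sum_{k=0}^{r-1}u_1^{r-1-k}u_2^k$. The paper never proves $F\notin J$. Instead it uses the minors on columns $1,k$ and $k,r$, namely $u_2u_k\equiv u_1u_{k+1}$ and $u_1u_k\equiv u_ru_{k+1}$, to rewrite each term as $u_1^{r-1-i}u_2^i\equiv u_r^{r-2}u_{i-1}$. Hence $F\equiv u_r^{r-2}(u_1+\cdots+u_r)$ and $(u_1-u_2)\,u_r^{r-2}(u_1+\cdots+u_r)\in J$. Every factor is then a nonzero linear form, excluded by degree. This is uniform in the characteristic, and it produces the explicit element reused in Theorem~\ref{prime}.

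You instead evaluate at the rank-one point $u_i=t$, which sends $F\mapsto rt^{r-1}$. You then treat $p=\operatorname{char}K\mid r$ separately via $u_1^r-u_2^r=(u_1-u_2)^{p^a}G^{p^a}$, where $G=\sum_{k=0}^{m-1}u_1^{m-1-k}u_2^k\mapsto mt^{m-1}\neq 0$. Your separate $m=1$ case is just the instance $G=1$.

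What your route buys is transparency: it shows why some case split is forced for any evaluation argument. When $p\mid r$, $F$ vanishes at the origin and on every line $u_i=\zeta^i\lambda$ with $\zeta^r=1$. These points make up all of $V(I)$, so $F\in\sqrt{I}$ and no point of the variety can witness $F\notin I$. The paper's congruence trick sidesteps this by trading $F$ for linear factors.

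The $\zeta$-specializations are superfluous. The closing ``compare dimensions'' paragraph could not stand alone anyway, since reducibility over $\overline K$ does not by itself descend to non-primality over $K$. Your main argument does not depend on it.
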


\begin{proof}
Let $J$ be the ideal generated by the $2$-minors of $U$. Throughout the proof, indices are taken modulo $r$, in particular, $u_{r+1}=u_1$. We first claim that
\[
u_1^r-u_2^r\in J.
\]
Indeed, for $k=1,\ldots,r$, the minor of the first and $k$-th columns gives
\begin{equation}\label{eq1}
u_1u_{k+1}-u_2u_k\in J.
\end{equation}
We prove by induction on $k$ that
\begin{equation}\label{eq:ind}
u_1^{k-1}u_{k+1}-u_2^k\in J
\end{equation}
for all $k=1,\ldots,r$.

For $k=1$, the assertion is trivial. Assume that $k\geq 2$ and that the assertion holds for $k-1$. Multiplying \eqref{eq1} by $u_1^{k-2}$ gives
\begin{equation}\label{eq2}
u_1^{k-1}u_{k+1}-u_1^{k-2}u_2u_k\in J.
\end{equation}
By the induction hypothesis, $u_1^{k-2}u_k-u_2^{k-1}\in J.$ Multiplying this by $u_2$, we obtain
\begin{equation}\label{eq3}
u_1^{k-2}u_2u_k-u_2^k\in J.
\end{equation}
Adding \eqref{eq2} and \eqref{eq3}, we get
\[
u_1^{k-1}u_{k+1}-u_2^k\in J.
\]
This proves \eqref{eq:ind}. Taking $k=r$ in \eqref{eq:ind}, and using $u_{r+1}=u_1$, we obtain
\[
u_1^r-u_2^r\in J.
\]
This proves the claim.

Now write
\[
u_1^r-u_2^r
=
(u_1-u_2)
\left(
u_1^{r-1}+u_1^{r-2}u_2+\cdots+u_2^{r-1}
\right).
\]
We next prove that
\[
u_1^{r-1}+u_1^{r-2}u_2+\cdots+u_2^{r-1}
\equiv
u_r^{r-2}(u_1+u_2+\cdots+u_r)
\pmod J.
\]
To prove this congruence, note that the minor of the $k$-th and $r$-th
columns yields $u_1u_k-u_ru_{k+1}\in J$ for $k=1,\ldots,r$. Hence
\begin{equation}\label{eq:colr}
u_1u_k
\equiv
u_ru_{k+1}
\pmod J
\qquad \text{for } k=1,\ldots,r.
\end{equation}
Similarly, the minor of the first and $k$-th columns yields
$u_1u_{k+1}-u_2u_k\in J$ for $k=1,\ldots,r$. Hence
\begin{equation}\label{eq:col1}
u_2u_k
\equiv
u_1u_{k+1}
\pmod J
\qquad \text{for } k=1,\ldots,r.
\end{equation}
We first claim that, for $i=0,\ldots,r-1$,
\begin{equation}\label{eq:claim}
u_1^{r-1-i}u_2^i
\equiv
u_1^{r-2}u_{i+1}
\pmod J.
\end{equation}
For $i=0$, the claim is immediate. Let $i\geq 1$. Repeated use of
\eqref{eq:col1} yields
\[
u_2^i
\equiv
u_1^{i-1}u_{i+1}
\pmod J.
\]
Multiplying by $u_1^{r-1-i}$, we obtain
\[
u_1^{r-1-i}u_2^i
\equiv
u_1^{r-2}u_{i+1}
\pmod J.
\]
Next, repeated application of \eqref{eq:colr} gives
\[
u_1^{r-2}u_{i+1}
\equiv
u_1^{r-2-t}u_r^t u_{i+1+t}
\pmod J
\]
for every $t=0,\ldots,r-2$. Taking $t=r-2$, we obtain
\[
u_1^{r-2}u_{i+1}
\equiv
u_r^{r-2}u_{i+r-1}
\pmod J.
\]
Since the indices are taken modulo $r$, we have
$u_{i+r-1}=u_{i-1}$. Therefore
\[
u_1^{r-2}u_{i+1}
\equiv
u_r^{r-2}u_{i-1}
\pmod J.
\]
Combining this with \eqref{eq:claim}, we obtain
\[
u_1^{r-1-i}u_2^i
\equiv
u_r^{r-2}u_{i-1}
\pmod J
\qquad \text{for } i=0,\ldots,r-1.
\]
As $i$ runs from $0$ to $r-1$, the indices $i-1$ run through all
residue classes modulo $r$. Hence the right-hand side runs through the
monomials
\[
u_r^{r-2}u_1,\,
u_r^{r-2}u_2,\,
\ldots,\,
u_r^{r-2}u_r.
\]
Summing over $i=0,\ldots,r-1$, we obtain
\[
u_1^{r-1}+u_1^{r-2}u_2+\cdots+u_2^{r-1}
\equiv
u_r^{r-2}(u_1+u_2+\cdots+u_r)
\pmod J.
\]
Since $u_1^r-u_2^r\in J$, it follows that
\[
(u_1-u_2)u_r^{r-2}(u_1+\cdots+u_r)\in J.
\]
However, $J$ is a homogeneous ideal generated by quadratic forms. Hence
$J$ contains no nonzero linear forms. In particular,
\[
u_1-u_2\notin J, \quad u_r\notin J
\qquad\text{and}\qquad
u_1+\cdots+u_r\notin J.
\]
If $J$ were prime, it would follow that one of
$u_1-u_2$, $u_r$, or $u_1+\cdots+u_r$ belongs to $J$, a contradiction.
Hence $J$ is not prime.
\hspace{15.1cm}
\end{proof}

\begin{Theorem}
\label{prime}
Let
\[
X=
\begin{bmatrix}
x_1 & x_2 & \cdots & x_n\\
z_1 & z_2 & \cdots & z_n
\end{bmatrix},
\]
where $z_1,\ldots,z_n$ are pairwise distinct elements of $\{x_1,\ldots,x_n,y_1,\ldots,y_n\}$ and $z_i\neq x_i$ for all $i=1,\ldots,n$. Then the ideal of $2$-minors of $X$ is prime if and only if $G_X$ contains no directed cycle.
\end{Theorem}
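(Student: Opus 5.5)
We prove both directions separately, using Lemma~\ref{lemma:cycle} for necessity and Corollary~\ref{toric_ideal} for sufficiency.

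\emph{Necessity.} Suppose $G_X$ contains a directed cycle. By Remark~\ref{rem:def} it is a whole connected component, say on vertices $v_1\to v_2\to\cdots\to v_r\to v_1$ with $r\ge 2$. The edge $v_k\to v_{k+1}$ means $x_{v_k}=z_{v_{k+1}}$. Equivalently, the column $v_{k+1}$ is $(x_{v_{k+1}},x_{v_k})^T$. Set $u_k=x_{v_{r+1-k}}$, reversing the cycle. Then the submatrix on these columns, after reordering, is exactly the matrix $U$ of Lemma~\ref{lemma:cycle}, whose top row is $u_1,\dots,u_r$ and whose bottom row is $u_2,\dots,u_r,u_1$. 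Let $J\subset R=K[u_1,\dots,u_r]$ be its ideal of $2$-minors, and let $I$ be the ideal of $2$-minors of $X$.

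To pass non-primeness from $J$ to $I$, we use the element constructed in Lemma~\ref{lemma:cycle}:
\[
f=(u_1-u_2)\,u_r^{r-2}\,(u_1+\cdots+u_r)\in J\subseteq I.
\]
The three factors are nonzero linear forms. They do not lie in $I$, because $I$ is generated by quadrics and contains no nonzero linear forms. Since $I$ contains a product of three elements none of which lies in $I$, it is not prime. Note that this argument does not need $I\cap R=J$; it only needs $J\subseteq I$, which holds because every $2$-minor of $U$ is a $2$-minor of $X$.

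\emph{Sufficiency.} Assume $G_X$ has no directed cycle. By Remark~\ref{rem:def}, every component is a directed path $w_1\to w_2\to\cdots\to w_s$. Here $w_1$ has indegree $0$, so $z_{w_1}\notin\{x_1,\dots,x_n\}$ and hence $z_{w_1}$ is some $y_j$. For $t\ge 2$ we have $z_{w_t}=x_{w_{t-1}}$.

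We reorder the columns component by component, listing each path in order, and rename variables as follows:
\begin{itemize}
\item the $x$'s are renamed $x_1,\dots,x_n$ according to their new column positions;
\item the bottom entry $z_{w_1}$ at the start of each path is renamed $y_p$, where $p$ is the new position of column $w_1$.
\end{itemize}
The $y$'s occurring as $z_{w_1}$ are pairwise distinct, and no $y$ occurs in the top row, so this renaming is a bijection onto a subset of the variables. After it, the matrix is exactly $X_n(\delta)$, where $\delta$ is the set of positions that are not the first vertex of a path. Position $1$ is always a path start, so $\delta\subseteq\{2,\dots,n\}$, as required; Example~\ref{example}(2) is an instance of this construction.

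Permuting columns does not change the ideal of $2$-minors, and renaming variables is a ring isomorphism. The ambient ring $S$ equals $S_\delta$ adjoined the $y$ variables that do not appear in $X$. Since $I_n(\delta)$ is prime in $S_\delta$ by Corollary~\ref{toric_ideal} (Theorem~\ref{prime+CM+linear}(i)), its extension to $S_\delta[\text{extra variables}]$ is still prime, because the quotient is a polynomial ring over a domain. Hence $I$ is prime.

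The main obstacle is the bookkeeping in the sufficiency direction: checking that path starts carry $y$-entries, that the relabeling is a bijection, and that the result is literally $X_n(\delta)$. The necessity direction is immediate from Lemma~\ref{lemma:cycle} together with the fact that $I$ contains no nonzero linear forms.
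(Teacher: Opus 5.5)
Your proposal is correct and follows the paper's proof essentially step for step. A directed cycle yields the matrix $U$ of Lemma~\ref{lemma:cycle}, whose proof supplies $(u_1-u_2)u_r^{r-2}(u_1+\cdots+u_r)\in J\subseteq I$. An acyclic $G_X$ is turned into $X_n(\delta)$ by ordering the columns along the paths and relabeling, after which Theorem~\ref{prime+CM+linear}(i) applies. Your reversal $u_k=x_{v_{r+1-k}}$ gets the orientation right, whereas the paper's displayed submatrix has its bottom row shifted the wrong way for $r\ge 3$; this is harmless, since reversing the cycle fixes it.

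One small slip: $u_r^{r-2}$ is not a linear form. To exclude it you should either use primality to reduce to $u_r\notin I$, as the paper does, or note that every element of the binomial ideal $I$ vanishes when all variables are set to $1$, while $u_r^{r-2}$ does not.
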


\begin{proof}
Suppose that $G_X$ contains a directed cycle $i_1\to i_2\to\cdots\to i_r\to i_1.$ Then, by definition,
\[
x_{i_1}=z_{i_2},\quad
x_{i_2}=z_{i_3},\quad
\ldots,\quad
x_{i_r}=z_{i_1}.
\]
Hence, after a cyclic permutation of its columns, the corresponding
submatrix of $X$ is
\[
\begin{bmatrix}
x_{i_1} & x_{i_2} & \cdots & x_{i_r}\\
x_{i_2} & x_{i_3} & \cdots & x_{i_1}
\end{bmatrix}.
\]
More precisely, the proof of Lemma~2.4 shows that
\[
(x_{i_1}-x_{i_2})x_{i_r}^{\,r-2}
(x_{i_1}+x_{i_2}+\cdots+x_{i_r})\in I.
\]
Since $I$ is a homogeneous ideal generated by quadratic forms, it
contains no nonzero linear forms. In particular,
\[
x_{i_1}-x_{i_2}\notin I,\qquad
x_{i_r}\notin I,\qquad\text{and}\qquad
x_{i_1}+\cdots+x_{i_r}\notin I.
\]
If $I$ were prime, it would follow that one of $x_{i_1}-x_{i_2}$, $x_{i_r}$ or $x_{i_1}+\cdots+x_{i_r}$ belongs to $I$, a contradiction. Hence $I$ is not prime.

Now, assume that $G_X$ contains no directed cycle. Then, by
Remark~\ref{rem:def}, every connected component of $G_X$ is a directed
path. Let $i_1\to i_2\to\cdots\to i_r$ be such a path. If $r\ge2$, then
\[
x_{i_1}=z_{i_2},\quad
x_{i_2}=z_{i_3},\quad
\ldots,\quad
x_{i_{r-1}}=z_{i_r}.
\]
Since $i_1$ has indegree zero, no entry among
$x_1,\ldots,x_n$ is equal to $z_{i_1}$. Hence
$z_{i_1}\in\{y_1,\ldots,y_n\}$. Therefore, after ordering the columns as
$i_1,i_2,\ldots,i_r$, the corresponding submatrix of $X$
\[
\begin{bmatrix}
x_{i_1} & x_{i_2} & \cdots & x_{i_r}\\
z_{i_1} & x_{i_1} & \cdots & x_{i_{r-1}}
\end{bmatrix},
\]
can be transformed into $X_r(\{2,\ldots,r\})$ by relabeling the variables.

Repeating this construction for each connected component, we obtain a
decomposition of $X$ into blocks of the form
$X_r(\{2,\ldots,r\})$ and single columns whose second-row entries are
$y$-variables. Equivalently, after a suitable permutation of the columns
and relabeling of the variables, the matrix $X$ is of the form
$X_n(\delta)$ for some $\delta\subseteq\{2,\ldots,n\}$. Therefore, the
ideal generated by the $2$-minors of $X$ is prime by (i) of 
Theorem~\ref{prime+CM+linear}.
\hspace{10.1cm}
\end{proof}

We can now state the main classification theorem.

\begin{Theorem}
\label{mainPCML}
Let $S=K[x_1, \ldots, x_n,y_1, \ldots,y_n]$ denote the polynomial ring in $2n$ variables over a field $K$ and $X$ a $2 \times n$ matrix
\[
X=
\begin{bmatrix}
x_1 & x_2 & \cdots & x_n\\
z_1 & z_2 & \cdots & z_n
\end{bmatrix},
\]
where $z_1,\ldots,z_n$ are pairwise distinct elements of $\{x_1,\ldots,x_n,y_1,\ldots,y_n\}$ and where $z_i\neq x_i$ for all $i=1,\ldots,n$.  Then the following conditions are equivalent:
\begin{enumerate}
\item[(i)] $X$ is a PCML matrix;
\item[(ii)] $I_2(X)$ is prime;
\item[(iii)] the directed graph $G_X$ contains no directed cycle.
\end{enumerate}
\end{Theorem}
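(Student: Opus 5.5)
The plan is to prove the cycle of implications (i)$\Rightarrow$(ii)$\Rightarrow$(iii)$\Rightarrow$(i), assembling results already established.

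\emph{(i)$\Rightarrow$(ii).} This is immediate, since primeness of $I_2(X)$ is condition (i) in the definition of a PCML matrix.

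\emph{(ii)$\Leftrightarrow$(iii).} This is exactly Theorem~\ref{prime}. The direction (ii)$\Rightarrow$(iii) is the cycle obstruction of Lemma~\ref{lemma:cycle}: a directed cycle produces the element $(x_{i_1}-x_{i_2})x_{i_r}^{r-2}(x_{i_1}+\cdots+x_{i_r})$ of $I_2(X)$, none of whose linear factors lies in the quadratically generated ideal $I_2(X)$.

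\emph{(iii)$\Rightarrow$(i).} Assume $G_X$ has no directed cycle. By Remark~\ref{rem:def} and the second half of the proof of Theorem~\ref{prime}, every component of $G_X$ is a directed path $i_1\to\cdots\to i_r$ with $z_{i_1}$ a $y$-variable. Concatenating the components, permuting columns, and relabeling variables therefore turns $X$ into $X_n(\delta)$, where $\delta$ is the set of positions that are not the first column of their block.

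The point I would check carefully is the ambient ring. $I_2(X)$ lives in $S=K[x_1,\ldots,x_n,y_1,\ldots,y_n]$, while $I_n(\delta)$ lives in $S_\delta$. The $y$-variables not occurring in $X$, namely $n-|\delta|$ of the $n$ $y$'s after relabeling, are free variables. Hence $S/I_2(X)\cong (S_\delta/I_n(\delta))[w_1,\ldots,w_{|\delta|}]$ for new indeterminates $w_1,\ldots,w_{|\delta|}$. Permuting columns and relabeling variables is a graded $K$-algebra automorphism, so it preserves the $2$-minors up to sign.

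Adjoining polynomial variables preserves primeness and the Cohen--Macaulay property. It also preserves graded Betti numbers, since the extension is flat and the minimal free resolution extends by base change. So Theorem~\ref{prime+CM+linear} (i)--(iii), via Corollary~\ref{CM_linear} and Corollary~\ref{toric_ideal}, gives that $I_2(X)$ is prime, $S/I_2(X)$ is Cohen--Macaulay, and $I_2(X)$ has a linear resolution. Thus $X$ is PCML.

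The only potential obstacle is this bookkeeping of extra variables and of the block-to-$X_n(\delta)$ identification. It is routine but should be stated explicitly.
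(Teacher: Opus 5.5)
Your proposal is correct and follows the paper's proof essentially verbatim. (i)$\Rightarrow$(ii) holds by definition, (ii)$\Leftrightarrow$(iii) is Theorem~\ref{prime}, and (iii)$\Rightarrow$(i) reduces to $X_n(\delta)$ and invokes Theorem~\ref{prime+CM+linear}; your extra-variable bookkeeping just spells out the paper's remark that adjoining unused variables changes nothing. One small slip: only $n-|\delta|$ of the $y$-variables occur in $X_n(\delta)$, so the unused ones number $|\delta|$, not $n-|\delta|$ as your parenthetical says, though your displayed isomorphism with $w_1,\ldots,w_{|\delta|}$ already uses the right count.
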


\begin{proof}
The implication $(i)\Rightarrow(ii)$ follows immediately from the
definition of a PCML matrix, while $(ii)\Leftrightarrow(iii)$ is
Theorem~\ref{prime}.

It remains to prove $(iii)\Rightarrow(i)$. Suppose that $G_X$ contains
no directed cycle. By Remark~\ref{rem:def}, every connected component of $G_X$
is a directed path. As shown in the proof of Theorem~\ref{prime}, after a
suitable permutation of the columns and relabeling of the variables,
the matrix $X$ is of the form $X_n(\delta)$ for some $\delta\subseteq\{2,\ldots,n\}$. By Theorem~\ref{prime+CM+linear}, the ideal
$I_n(\delta)$ is prime, the quotient $S_\delta/I_n(\delta)$ is
Cohen--Macaulay, and $I_n(\delta)$ has a linear resolution.
These properties are preserved under relabeling of variables and
permutation of columns, and adjoining unused variables does not affect
them. Hence $X$ is a PCML matrix.
\end{proof}

\section*{Acknowledgments} The second author is supported by Scientific and Technological Research Council of Turkey T\"UB\.{I}TAK under the Grant No: 124F113, and is thankful to T\"UB\.{I}TAK for its support.

\bigskip

\begin{footnotesize}
{\bf Declaration of competing interest.}  The authors declare that they have no known competing financial interests or personal relationships that could have appeared to influence the work reported in this paper.

\medskip

{\bf Data availability.} No data was used for the research described in the article. 

\medskip

\end{footnotesize}

\end{document}